\newcommand{\N}{\mathbb{N}}
\newcommand{\Z}{\mathbb{Z}}
\newcommand{\R}{\mathbb{R}}
\newcommand{\X}{\mathcal{X}}
\newcommand{\Q}{\mathcal{Q}}
\newcommand{\T}{\mathcal{T}}
\newcommand{\kei}{\triangleright}
\newtheorem{theorem}{Theorem}
\theoremstyle{definition}
\newtheorem{corollary}[theorem]{Corollary}
\newtheorem{proposition}[theorem]{Proposition}
\newtheorem{example}[theorem]{Example}
\newtheorem{remark}[theorem]{Remark}
\def\co{\colon\thinspace}
\begin{document}
\title{Quandle coloring quivers of $(p, 2)$-torus links}

\author{Jagdeep Basi}
\address{Department of Mathematics, California State University, Fresno \linebreak 5245 North Backer Avenue, M/S PB108, CA 93740, USA}
\email{jbasi@protonmail.com}

\author{Carmen Caprau}
\address{Department of Mathematics, California State University, Fresno\linebreak 5245 North Backer Avenue, M/S PB108, CA 93740, USA}
\email{ccaprau@csufresno.edu}

\subjclass[2020]{57K10, 57K12}
\keywords{Quandles, quandle coloring quivers, torus knots and links.}
\thanks{CC was partially supported by Simons Foundation grant $\#355640$}

\begin{abstract}
A quandle coloring quiver is a quiver structure, introduced by Karina Cho and Sam Nelson, which is defined on the set of quandle colorings of an oriented knot or link by a finite quandle. We study quandle coloring quivers of $(p, 2)$-torus knots and links with respect to dihedral quandles.
\end{abstract}

\maketitle

\section{Introduction} \label{sec:intro}
A quandle~\cite{DJoyce, Matveev} is a non-associate algebraic structure whose definition is motivated by knot theory, and it provides an axiomatization of the Reidemeister moves for link diagrams. The fundamental quandle of a link is a quandle that can be computed from any diagram of the link.
Joyce~\cite{DJoyce} and Matveev~\cite{Matveev} proved that the fundamental quandle is a stronger invariant of knots and links than the knot group. Quandles have been used to define other useful invariants of oriented links (see~\cite{CEGS, CJKLS, ChoNelson, ElhamdadiNelson, GN} and others). For example, the quandle counting invariant~\cite{ElhamdadiNelson} is the size of the set of all homomorphisms from the fundamental quandle of an oriented link to a finite quandle. More recently, Cho and Nelson~\cite{ChoNelson} introduced the notion of a quandle coloring quiver of an oriented link $K$ with respect to a finite quandle $\X$, which is a directed graph that allows multiple edges and loops, and enhances the quandle counting invariant of $K$ by $\X$.

In this paper, we study quandle coloring quivers of $(p,2)$-torus links by dihedral quandles. The paper is organized as follows. In Sec.~\ref{sec:prelims} we review some basic concepts about torus links and quandles. In Sec.~\ref{ssec:2.2} we also prove a couple of statements about the coloring space of $(p, 2)$-torus links by dihedral quandles, as they will be needed later in the paper. Sec.~\ref{sec:quivers} is the heart of the paper; here we review the definition of quandle coloring quivers of oriented links with respect to finite quandles and study the quivers of $(p, 2)$-torus links with respect to dihedral quandles.

\section{Preliminaries} \label{sec:prelims}

In order to have a self-contained paper, we begin by reviewing relevant definitions and concepts.

\subsection{Basic definitions} \label{sses:2.1}
A knot or link $K$ is called a \textit{$(p, q)$-torus knot} or \textit{link} if it lies, without any points of intersection, on the surface of a trivial torus in $\R^3$. The integers $p$ and $q$ represent the number of times $K$ wraps around the meridian and, respectively, the longitude of the torus. We denote a $(p, q)$-torus knot or link by $\mathcal{T}(p,q)$. It is known that $\mathcal{T}(p,q)$ is a knot if and only if $\gcd(p, q) = 1$ and $p$ and $q$ are not equal to 1 simultaneously, and that it is a link with $d$ components if and only if $\gcd(p, q) = d$. Moreover, $\mathcal{T}(p,q)$ is the trivial knot if and only if either $p$ or $q$ is equal to $1$ or $-1$. We use the term link to refer generically to both knots and links. It is also known that the $(p, q)$-torus link is isotopic to the $(q, p)$-torus link. The $(-p, -q)$-torus link is the $(p, q)$-torus link with reverse orientation, and torus links are invertible. For simplicity, we will assume that $p, q >0$. The mirror image of $\mathcal{T}(p,q)$ is $\mathcal{T}(p, - q)$ and a non-trivial torus-knot is chiral.

It is known that the crossing number of a torus link with $p, q >0$ is given by $c = \text{min}((p-1)q, (q-1)p)$. The link $\mathcal{T}(p,q)$ can also be represented as the closure of the braid $(\sigma_1 \sigma_2 \dots \sigma_{p-1})^q$. Since $\mathcal{T}(p,q)$ is isotopic to $\mathcal{T}(q, p)$, this torus link has also a diagram which is the closure of the braid $(\sigma_1 \sigma_2 \dots \sigma_{q-1})^p$.

The simplest nontrivial torus knot is the trefoil knot, $\mathcal{T}(3,2)$, depicted in Fig.~\ref{Trefoil}. For more details on torus knots and links, we refer the reader to the books~{\cite{Kawauchi, Lickorish, Livingston, Murasugi}. 
\begin{figure}[ht] 
\includegraphics[height=1in]{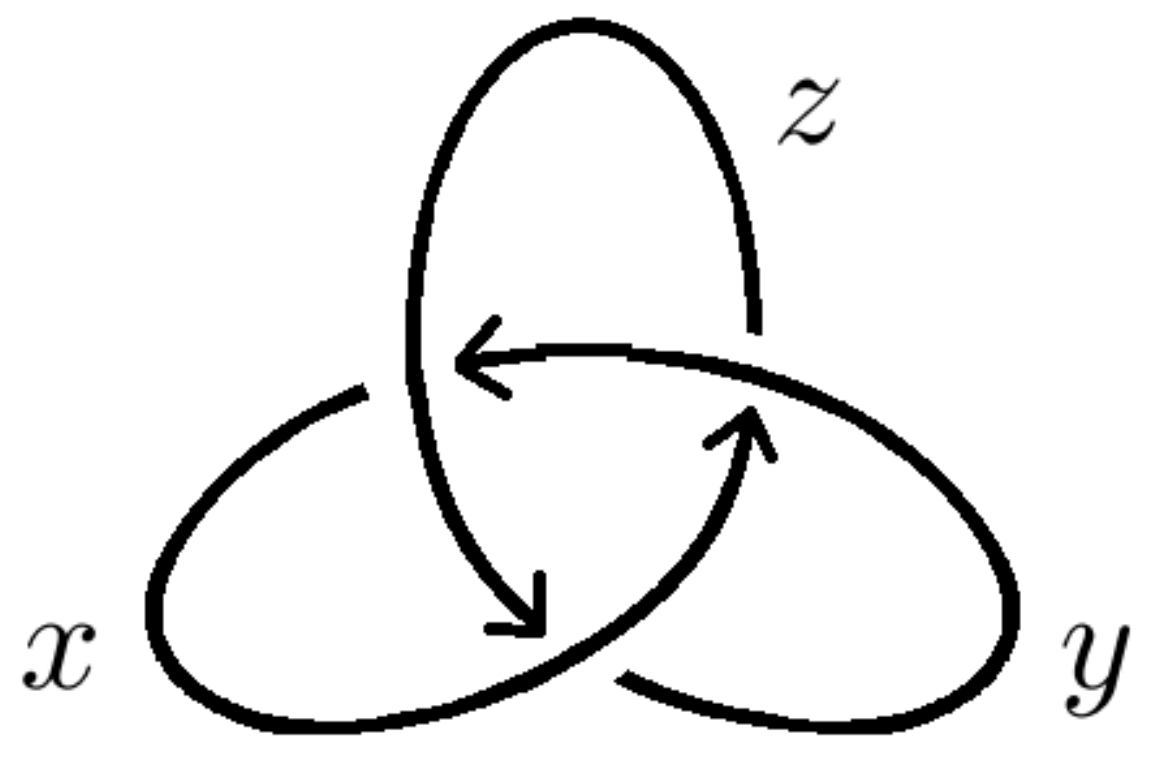}
\caption{Oriented torus knot $\T(3,2)$}
\label{Trefoil}
\end{figure}

The term quandle is attributed to Joyce in his 1982 work~\cite{DJoyce} as an algebraic structure whose axioms are motivated by the Reidemeister moves.

A \textit{quandle}~\cite{DJoyce, Kamada, Matveev} is a non-empty set $\X$ together with a binary operation $\kei: \X \times \X \to \X$ satisfying the following axioms:
\begin{enumerate}
\item For all $x \in \X$, $x \kei x = x$. 
\item For all $y \in \X$, the map $\beta_y: \X \to \X$ defined by $\beta_y(x) = x \kei y$ is invertible. 
\item For all $x,y,z \in \X$, $(x \kei y) \kei z = (x \kei z ) \kei (y \kei z)$.
\end{enumerate}
In this case, we use the notation $(\X,\kei)$ or $(\X,\kei_{\X})$, if we need to be specific about the quandle operation.


A \emph{quandle homomorphism}~\cite{Inoue} between two quandles $(\X,\kei_{\X})$ and $(\mathcal{Y},\kei_{\mathcal{Y}})$ is a map $f \co \X \to \mathcal{Y}$ such that $f(a\kei_{\X} b) = f(a) \kei_{\mathcal{Y}} f(b)$, for any $a,b \in \X$.

It is not hard to see that the composition of quandle homomorphisms is again a quandle homomorphism. We note that the second axiom of a quandle implies that for all $y \in \X$, the map $\beta_y$ is an automorphism of $\X$. Moreover, by the first axiom of a qundle, the map $\beta_y$ fixes $y$.
\begin{example} \label{dihedral quandle2}
Let $n \in \Z, n \ge2, \X = \Z_n = \{0, 1, \dots, n-1\}$ and define $x \kei y = (2y - x) \mod  n$, for all $x,y \in \Z_n$. That is, $x \kei y$ is the remainder of $2y-x$ upon division by $n$. Then $\Z_n$ together with this operation $\kei$ is a quandle, called the \textit{dihedral quandle} (see~\cite{ElhamdadiNelson, MTakasaki}).
\end{example}

\begin{example}\label{fundamental quandle}
To every oriented link diagram of $K$, we can naturally associate its \textit{fundamental quandle} (also referred to as the \textit{knot quandle}), denoted by $Q(K)$. Given a diagram $D$ of $K$, its crossings divide $D$ into arcs and we use a set of labels to label all of these arcs. The fundamental quandle $Q(D)$ is the quandle generated by the set of arc-labels together with the crossing relations given by the operation $\kei$ explained in Fig.~\ref{QuandleCrossing}.
\begin{figure}[ht]
\includegraphics[height = 1.2in]{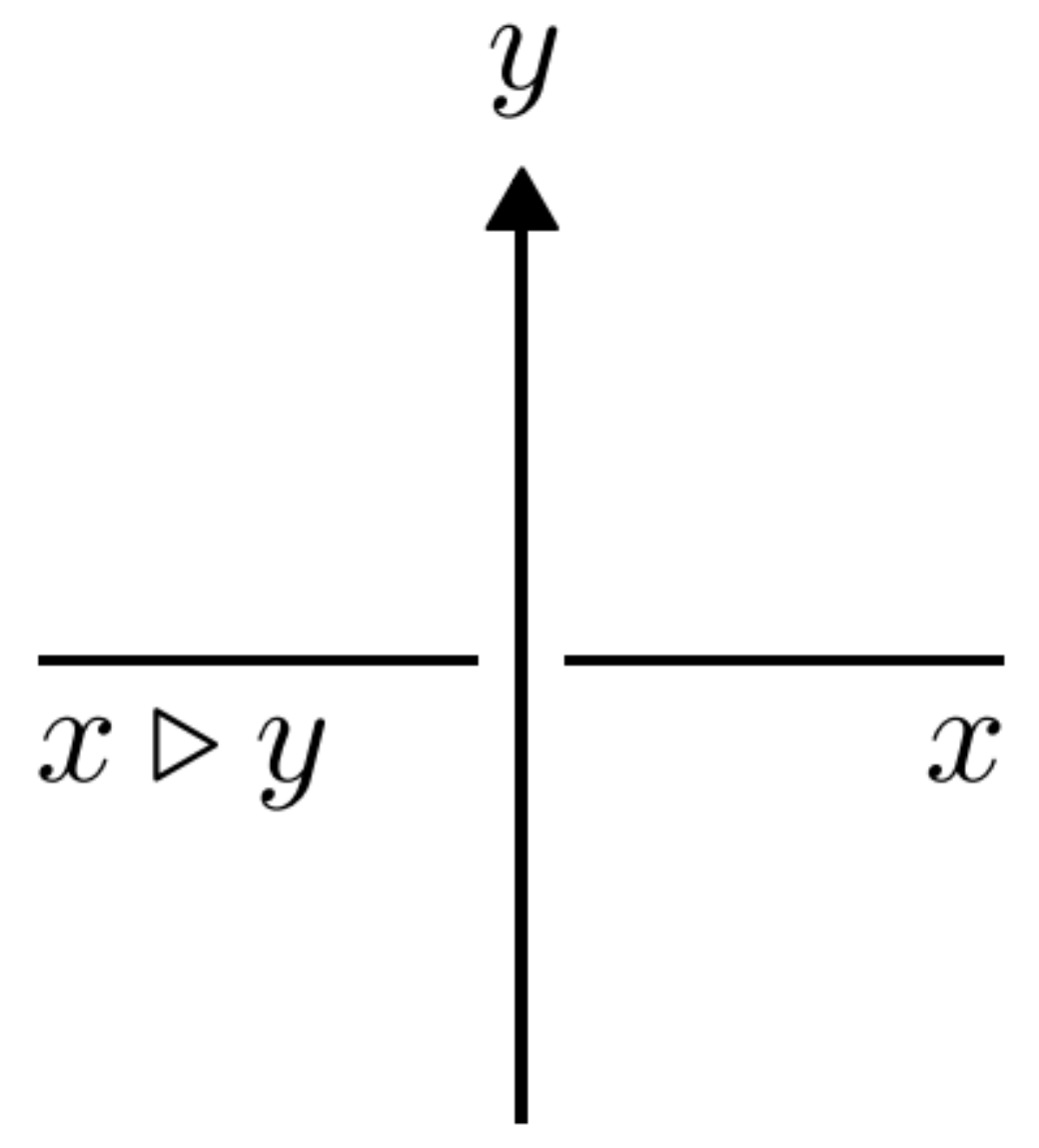}
\caption{Labelling a knot crossing in a fundamental quandle}
 \label{QuandleCrossing}
\end{figure}
\end{example}

The axioms of a quandle encode the Reidemeister moves, and thus the fundamental quandle is independent on the diagram $D$ representing the oriented link $K$. Hence, $Q(K): = Q(D)$ is a link invariant. It was shown independently by Joyce~\cite{DJoyce} and Matveev~\cite{Matveev} that the fundamental quandle is a stronger invariant than the fundamental group of a knot or link. In fact, the fundamental quandle is a complete invariant, in the sense that if $Q(K_1)$ and $Q(K_2)$ are isomorphic quandles, then the (unoriented) links $K_1$ and $K_2$ are equivalent (up to chirality). However, determining whether or not two quandles are isomorphic is not an easy task.

\begin{remark}\label{fundamental quandle of torus knot}
We consider the fundamental quandle of a $(p,2)$-torus link. By arranging the quandle relations of the fundamental quandle in a particular way, we can present $Q(\T(p,2))$ in a recursive fashion. 
We know that a $(p,2)$-torus link can be represented as the closure of the braid $\sigma_1^p$, which is also a minimal diagram for the $(p,2)$-torus link. 
There are $p$ crossings and $p$ arcs labeled $\{x_1, x_2, \dots, x_p\}$ in such a diagram of $\T(p,2)$. Considering the braid $\sigma_1^p$, we label its arcs starting from the bottom right with $x_1$  then moving to the bottom left arc to label it $x_2$, and continuing up along the braid using labels $x_3, x_4, \dots, x_p$, as shown in Fig.~\ref{FundQuandle torus knot}. We obtain the following presentation for the fundamental quandle of the $(p,2)$-torus link:

\begin{center}
$Q(\T(p,2)) =\langle x_1,x_2,x_3,\dots,x_p \, | \, x_2\kei x_1 = x_p, x_1\kei x_p = x_{p-1},$ and \\
$ \hspace{6cm} x_i \kei x_{i-1} = x_{i-2}, \text{ for all } 3 \leq i \leq p\rangle$.
\end{center}
\end{remark}

\begin{figure}[ht]
\includegraphics[height = 2in]{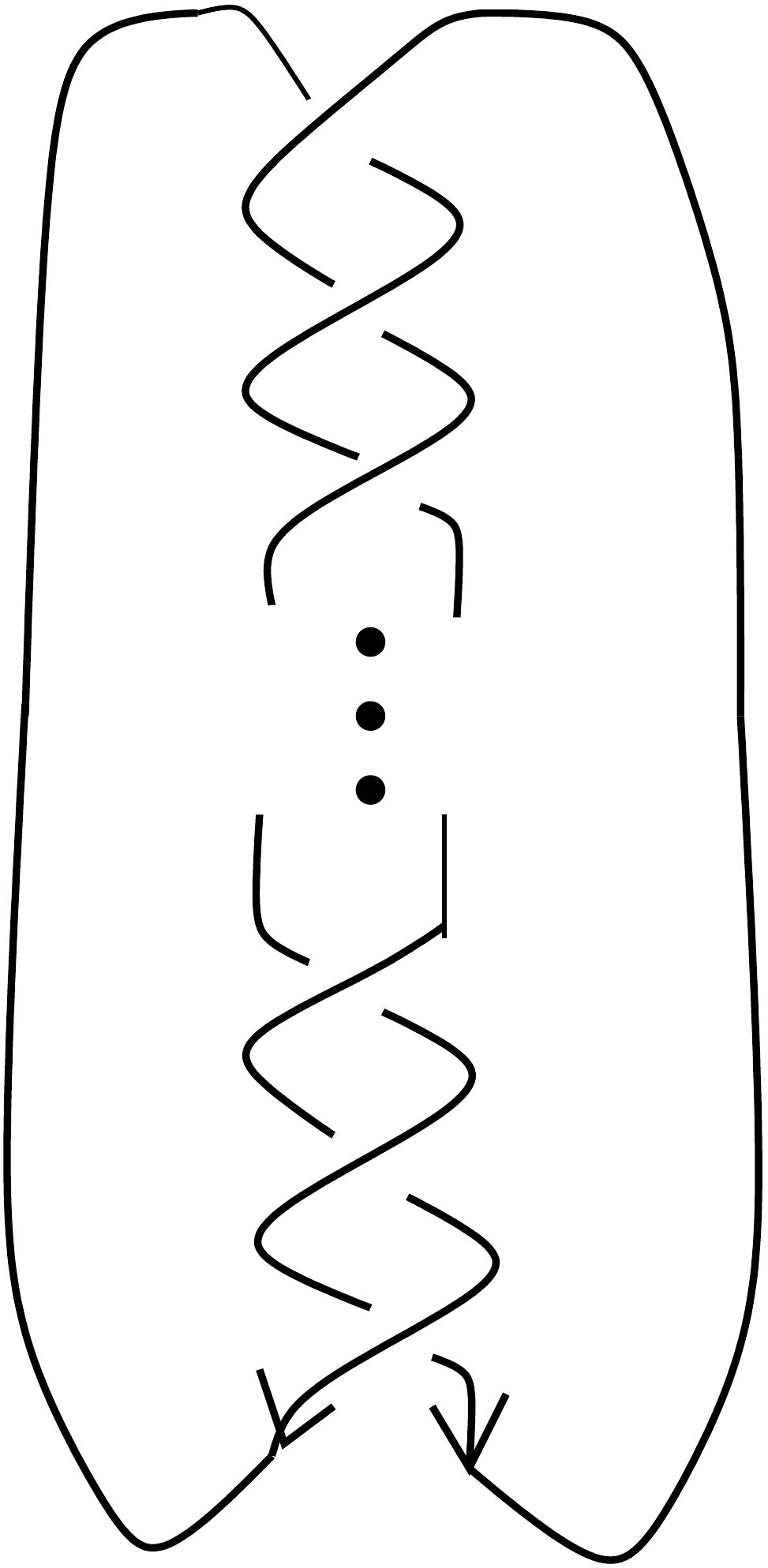}
\put(-20, 10){\fontsize{10}{10}$x_1$}
\put(-58, 10){\fontsize{10}{10}$x_2$}
\put(-20, 27){\fontsize{10}{10}$x_2$}
\put(-20, 45){\fontsize{10}{10}$x_3$}
\put(-20, 63){\fontsize{10}{10}$x_4$}
\put(-25, 90){\fontsize{10}{10}$x_{p-2}$}
\put(-25, 107){\fontsize{10}{10}$x_{p-1}$}
\put(-25, 125){\fontsize{10}{10}$x_{p}$}
\put(-28, 146){\fontsize{10}{10}$x_{1}$}
\put(-58, 146){\fontsize{10}{10}$x_{2}$}
\caption{A diagram of a $(p, 2)$-torus link}
 \label{FundQuandle torus knot}
\end{figure}

When classifying links, it is useful to count the number of quandle homomorphisms from the fundamental quandle of a link to a fixed quandle. For example, Inoue~\cite{Inoue} showed that the number of all quandle homomorphisms from the fundamental quandle of a link to an Alexander quandle~\cite{ElhamdadiNelson, Inoue} has a module structure and that it is determined by the series of the Alexander polynomials of the link.  

A \emph{quandle coloring} of an oriented link $K$ with respect to a finite quandle $\X$ (also called an \emph{$\X$-quandle coloring}) is given by a quandle homomorphism $\psi: Q(K) \to \X$. In this case, we refer to $\X$ as the coloring quandle. We let $Hom(Q(K), \X)$ be the collection of all $\X$-quandle colorings and we call it the \textit{coloring space} of $K$ with respect to $\X$. Finally, the cardinality $|Hom(Q(K), \X)|$ of the coloring space is called the \textit{quandle counting invariant with respect to $\X$} (see~\cite{ChoNelson}).

When a quandle coloring consists of a single color, we use the term \emph{trivial coloring}, and make use of the term \emph{nontrivial coloring} otherwise.

\subsection{The coloring space of $(p, 2)$-torus links with respect to dihedral quandles} \label{ssec:2.2}

In this paper we are concerned with $(p, 2)$-torus links, and we need to understand the coloring space of these links with respect to dihedral quandles. The following proposition and theorem are our first results.

\begin{proposition}\label{coloring lemma}
Let $\T(p,2)$ be a torus link and $\Z_n$ a dihedral quandle.  Consider any two fixed and consecutive arcs of $\T(p,2)$ with labels $x_1,x_2\in Q(\T(p,2))$.  If $\psi \in Hom(Q(\T(p,2)),\Z_n)$, then $p\psi(x_1)\equiv p\psi(x_2) \text{ (mod }n)$.
\end{proposition}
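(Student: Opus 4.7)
The plan is to use the recursive presentation of $Q(\T(p,2))$ given in Remark~\ref{fundamental quandle of torus knot}, translating each quandle relation into a $\Z_n$-congruence via the dihedral rule $a \kei b \equiv 2b - a \pmod{n}$. Writing $a_i := \psi(x_i)$ for $1 \le i \le p$, the ``interior'' relations $x_i \kei x_{i-1} = x_{i-2}$ (for $3 \le i \le p$) become $a_{i-2} \equiv 2a_{i-1} - a_i \pmod{n}$, which rearranges to
\[
a_i - a_{i-1} \equiv a_{i-1} - a_{i-2} \pmod{n}.
\]
Iterating, the consecutive differences along $a_1, a_2, \ldots, a_p$ are all congruent mod $n$; setting $d := a_2 - a_1 \pmod{n}$, we obtain $a_i \equiv a_1 + (i-1)d \pmod{n}$ for every $1 \le i \le p$.

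Next, I would plug this arithmetic-progression form into the ``closure'' relation $x_2 \kei x_1 = x_p$, which reads $a_p \equiv 2a_1 - a_2 \equiv a_1 - d \pmod{n}$. Comparing with $a_p \equiv a_1 + (p-1)d$ yields $pd \equiv 0 \pmod{n}$. The other closure relation $x_1 \kei x_p = x_{p-1}$ produces the same congruence, providing a consistency check. Any pair of consecutive arc labels in Fig.~\ref{FundQuandle torus knot}, which are what the statement calls $x_1, x_2$ after relabeling, differ by $\pm d$ modulo $n$; multiplying by $p$ therefore gives $p\psi(x_1) \equiv p\psi(x_2) \pmod{n}$, which is the desired conclusion.

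The main obstacle is really just bookkeeping: recognizing that the interior relations force the full arithmetic-progression structure, and that the single closure relation is exactly what delivers the divisibility $n \mid pd$ needed to finish. A small sanity point to check is that ``consecutive arcs'' in the statement does correspond to $(x_i, x_{i+1})$ adjacencies in the labeling scheme of Fig.~\ref{FundQuandle torus knot}; once that is clear, the linearity of the dihedral operation reduces every step to a routine calculation in $\Z_n$.
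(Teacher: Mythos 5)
Your proposal is correct and follows essentially the same route as the paper: both work from the presentation of $Q(\T(p,2))$ in Remark~\ref{fundamental quandle of torus knot}, turn each relation into a linear congruence via $a \kei b \equiv 2b-a \pmod{n}$, and close the loop to extract $p\bigl(\psi(x_2)-\psi(x_1)\bigr)\equiv 0 \pmod{n}$. Your packaging of the interior relations as ``constant consecutive differences,'' so that the coloring is an arithmetic progression with common difference $d$ and the closure relation forces $pd\equiv 0 \pmod{n}$, is just a tidier way of organizing the paper's iterative substitution $\psi(x_2) \equiv k\psi(x_{k+1})-(k-1)\psi(x_{k+2})$, and it has the small added benefit of making the ``any two consecutive arcs'' clause of the statement immediate.
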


\begin{proof}
By Remark \ref{fundamental quandle of torus knot}, the fundamental quandle of the torus link $\T(p,2)$ has the following presentation:
\begin{center}
$Q(\T(p,2)) =\langle x_1,x_2,x_3,\dots,x_p \, | \, x_2\kei x_1 = x_p, x_1\kei x_p = x_{p-1},$ and \\
$ \hspace{6cm} x_i \kei x_{i-1} = x_{i-2}, \text{ for all } 3 \leq i \leq p\rangle$.
\end{center}
For any $\Z_n$-quandle coloring $\psi \in Hom(Q(\T(p,2)), \Z_n)$ and $x_i,x_j \in Q(\T(p,2))$, we must have that
$\psi(x_i\kei x_j) = \psi(x_i)\kei' \psi(x_j)$, where $\kei$ and $\kei'$ are the quandle operations for $Q(\T(p,2))$ and $\Z_n$, respectively.
 In particular,
\begin{align*}
\psi(x_p) &= \psi(x_2\kei x_1) = \psi(x_2)\kei'\psi(x_1) = (2\psi(x_1)-\psi(x_2))\text{ mod }n, \\
\psi(x_{p-1}) &= \psi(x_1\kei x_p) = \psi(x_1)\kei'\psi(x_p) = (2\psi(x_p)-\psi(x_1))\text{ mod }n.
\end{align*}
Moreover, for all $3 \leq i \leq p$, the following holds:
\[ \psi(x_{i-2}) = \psi(x_i\kei x_{i-1}) = \psi(x_i)\kei'\psi(x_{i-1}) = (2\psi(x_{i-1})-\psi(x_i))\text{ mod }n .\]
Using these statements inductively, we obtain:
\begin{align*}
\psi(x_2) &=(2\psi(x_3)-\psi(x_4)) \text{ mod }n \\
&=(2(2\psi(x_4)-\psi(x_5))-\psi(x_4)) \text{ mod }n \\
&= (3\psi(x_4)-2\psi(x_5))\text{ mod }n \\
&=(3(2\psi(x_5)-\psi(x_6))-2\psi(x_5)) \text{ mod }n \\
&= (4\psi(x_5)-3\psi(x_6))\text{ mod }n \\
&\hspace{1.5in}\vdots \\
&= ((p-1)\psi(x_p)-(p-2)\psi(x_1))\text{ mod } n \\
&= ((p-1)(2\psi(x_1)-\psi(x_2))-(p-2)\psi(x_1))\text{ mod } n \\
&= (p\psi(x_1)-(p-1)\psi(x_2))\text{ mod } n.
\end{align*}
It follows that $p\psi(x_2)\equiv p\psi(x_1) \text{ (mod }n).$
\end{proof}

We are now ready to prove the following theorem about the coloring space of $(p,2)$-torus links with respect to dihedral quandles, which will be used extensively in Sec.~\ref{sec:quivers}.
\begin{theorem}\label{torus coloring theorem}
For a given torus link $\T(p,2)$ and $\Z_n$ a dihedral quandle, the following holds:

(i) If $\gcd(p,n) = 1$, then $|Hom(Q(\T(p,2)), \Z_n)| = n$ and the coloring space $Hom(Q(\T(p,2)), \Z_n)$ is the collection of all trivial $\Z_n$-quandle colorings of $\T(p,2)$.

(ii)  If $\gcd(p,n) = c \neq1$, then $|Hom(Q(\T(p,2)), \Z_n)| = nc$, and the coloring space $Hom(Q(\T(p,2)), \Z_n)$ is the union of all $n$ trivial quandle colorings together with a collection of $n(c-1)$ nontrivial $\Z_n$-quandle colorings of $\T(p,2)$.
\end{theorem}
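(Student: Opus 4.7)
The plan is to parametrize the set $Hom(Q(\T(p,2)), \Z_n)$ explicitly by a pair of integers modulo $n$ subject to a single divisibility condition, and then count such pairs. Write $a_i := \psi(x_i)$ for a homomorphism $\psi$. By the presentation in Remark~\ref{fundamental quandle of torus knot}, $\psi$ is determined by the tuple $(a_1, \ldots, a_p) \in \Z_n^p$ subject to the images of the defining relations of $Q(\T(p,2))$.

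First I would observe that the recursive relations $x_i \kei x_{i-1} = x_{i-2}$ (for $3 \le i \le p$) translate, under $\psi$ and the dihedral operation, to
\[ a_{i-2} \equiv 2a_{i-1} - a_i \pmod n, \qquad \text{i.e.,} \qquad a_i - a_{i-1} \equiv a_{i-1} - a_{i-2} \pmod n. \]
Thus the sequence $a_1, \ldots, a_p$ is an arithmetic progression in $\Z_n$ with common difference $d := a_2 - a_1$, so $a_i \equiv a + (i-1)d \pmod n$ where $a := a_1$. In particular $\psi$ is determined by the pair $(a, d) \in \Z_n \times \Z_n$. Substituting this progression into the two wrap-around relations $x_2 \kei x_1 = x_p$ and $x_1 \kei x_p = x_{p-1}$, a short computation shows both collapse to the single condition $pd \equiv 0 \pmod n$---matching exactly the constraint of Proposition~\ref{coloring lemma}. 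Conversely, for any pair $(a,d) \in \Z_n \times \Z_n$ with $pd \equiv 0 \pmod n$, defining $\psi(x_i) := a + (i-1)d \pmod n$ yields a well-defined quandle homomorphism, since the arithmetic-progression structure makes all recursive relations hold automatically and the wrap-around relations hold by the chosen condition on $d$.

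With the parametrization established, the count is immediate. The number of $d \in \Z_n$ with $pd \equiv 0 \pmod n$ equals $\gcd(p,n) = c$, while $a$ ranges freely over $\Z_n$; hence $|Hom(Q(\T(p,2)), \Z_n)| = nc$. The trivial colorings correspond precisely to $d \equiv 0 \pmod n$ (so that $\psi(x_i) = a$ for all $i$), giving $n$ trivial colorings---one for each choice of $a$. Any coloring with $d \not\equiv 0 \pmod n$ satisfies $\psi(x_1) \neq \psi(x_2)$ and is therefore nontrivial, producing $n(c-1)$ additional colorings. When $c = 1$, the only solution is $d = 0$, so all $n$ colorings are trivial, yielding (i); when $c \ne 1$, the decomposition of (ii) follows.

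The main obstacle is the bookkeeping in verifying that both wrap-around relations reduce to the same divisibility condition $pd \equiv 0 \pmod n$ and impose no additional constraint. This is essentially encoded in the inductive calculation already carried out in the proof of Proposition~\ref{coloring lemma}, so invoking that proposition---combined with a direct check of the converse---is the cleanest route and keeps the bookkeeping manageable.
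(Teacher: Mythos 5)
Your proposal is correct and follows essentially the same route as the paper: both reduce the count to choosing $\psi(x_1)$ freely and counting the admissible values of $\psi(x_2)$ subject to $p\bigl(\psi(x_2)-\psi(x_1)\bigr)\equiv 0 \pmod n$, via the recursion from Remark~\ref{fundamental quandle of torus knot} and Proposition~\ref{coloring lemma}. Your arithmetic-progression parametrization $(a,d)$ and the explicit check that both wrap-around relations reduce to $pd\equiv 0\pmod n$ (and hence that every admissible pair genuinely extends to a homomorphism) makes the sufficiency direction more explicit than the paper does, but the underlying argument is the same.
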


\begin{proof}
Let $\psi\in Hom(Q(\T(p,2)), \Z_n)$. By Proposition~\ref{coloring lemma}, 
\[p\psi(x_1)\equiv p\psi(x_2) \text{ (mod }n).\]

(i) If $\gcd(p,n) = 1$, then the above congruence simplifies to
\[\psi(x_1)\equiv \psi(x_2) \text{ (mod }n).\]
The latter can only occur when $\psi$ maps $x_1$ and $x_2$ to the same color. Using the relations of $Q(\T(p,2))$, as described in Remark~\ref{fundamental quandle of torus knot}, this forces the quandle homomorphism $\psi$ to color every arc in the diagram of $\T(p,2)$ the same.  Since there are $n$ quandle colorings to choose from, $|Hom(Q(\T(p,2)), \Z_n)| = n$, and $Hom(Q(\T(p,2)), \Z_n)$ is the collection of all trivial quandle colorings of $\T(p,2)$ with respect to $\Z_n$. 

(ii) If  $\gcd(p,n)=c\neq 1$, then the congruence $p\psi(x_1)\equiv p\psi(x_2) \text{ (mod }n)$ simplifies to the following:
\[ \psi(x_1)\equiv \psi(x_2) \text{ }\left(\text{mod }\frac{n}{c}\right).\]
Then $\psi(x_2) \equiv \psi(x_1)+ k\frac{n}{c}\text{ }(\text{mod }n/c)$ also holds for all integers $k$, where $0 \leq k \leq c-1$. Hence, for each fixed color $\psi(x_1) \in \Z_n$,  there are $c$ possibilities for $\psi(x_2)$ in $\Z_n$ such that $p\psi(x_1)\equiv p\psi(x_2) \text{ (mod }n)$. Since there are $n$ choices for $\psi(x_1)\in\Z_n$, we conclude that there are $nc$ $\Z_n$-quandle colorings of $\T(p,2)$.  Moreover, since there are $n$ trivial quandle colorings with respect to $\Z_n$, the remaining $n(c-1)$ $\Z_n$-quandle colorings must be nontrivial quandle colorings of the link $\T(p,2)$ with respect to $\Z_n$. 
\end{proof}

\section{Quandle coloring quivers of $(p,2)$-torus links with respect to dihedral quandles}  \label{sec:quivers}

A \textit{quandle enhancement} is an invariant for oriented links that captures the quandle counting invariant and in general is a stronger invariant. For some examples of quandle enhacements, we refer the reader to~\cite{CEGS, CJKLS, ElhamdadiNelson, GN}. More recently, by representing quandle colorings as vertices in a graph, Cho and Nelson~\cite{ChoNelson} imposed a combinatorial structure on the coloring space of a link that led to new quandle enhancements.  For example, one of these enhancements, the quandle coloring quiver, can distinguish links that have the same quandle counting invariant (see~\cite[Example 6]{ChoNelson}).

In what follows, we work with graphs that are allowed to have multiple edges and loops. To capture multiple edges in a graph $G$, we use a weight function $c \co \{(x, y) \, | \, x, y \in V(G) \} \to \N \cup \{0\}$, where $V(G)$ is the vertex set of $G$ and $(x, y)$ is any edge connecting vertices $x, y \in V(G)$. We denote the corresponding graph with a weight function $c$ by $(G, c)$. When $c$ is a constant function $c(x, y) = k$, for all $x, y \in V(G)$ and some $k \in \N$, then we use the notation $c = \hat{k}$ for the weight function and $(G, \hat{k})$ for the corresponding graph.

We say that a graph is \textit{complete}, provided that there is an edge between every pair of distinct vertices and a loop at each vertex; we denote a complete graph with $n$ vertices by $K_n$. The graphs we work with in this paper are all directed, and we use the notation $\overleftrightarrow{G}$ to denote a graph $G$ with all edges directed both ways. 

Given two graphs $G_1$ and $G_2$ with disjoint vertex sets and edge sets, the \textit{join of $G_1$ and $G_2$}, denoted $G_1 \nabla G_2$, is the graph with vertex set $V(G_1) \cup V(G_2)$ and edge set containing all edges in $G_1$ and $G_2$, as well as all edges that connect vertices of $G_1$ with vertices of $G_2$. Finally, if there is the same number of directed edges from each vertex of the graph $G_2$ to the graph $G_1$ with constant weight function $\hat{k}$ and no directed edges from vertices of $G_1$ to vertices of $G_2$, we use the notation $G_1 \overleftarrow{\nabla}_{\hat{k}} G_2$ to denote such a join of graphs.

Let $\X$ be a finite quandle and $K$ an oriented link.  For any set of quandle homomorphims $S \subset Hom(\X, \X)$, the associated \textit{quandle coloring quiver}~\cite{ChoNelson}, denoted $\Q^S_{X}(K)$, is the directed graph with a vertex $v_f$ for every element $f \in Hom(Q(K), \X)$ and an edge directed from the vertex $v_f$ to the vertex $v_g$, whenever $g = \phi  \circ f$, for some $\phi \in S$. 
When $S = Hom(\X, \X)$, we denote the associated quiver by $\Q_{\X}(K)$, and call it the \emph{full $\X$-quandle coloring quiver} of $K$.

It was explained in~\cite{ChoNelson} that $\Q^S_{X}(K)$ is a link invariant, in the sense that the quandle coloring quivers associated to diagrams that are related by Reidemeister moves are isomorphic quivers.

 \textbf{Notation.} We establish some notation for mappings.  Consider a function $g \co \X \to  \mathcal{Y}$, where $\X=\{x_1,x_2,\dots,x_n\}$ and $\mathcal{Y}=\{y_1,y_2,\dots,y_m\}$.  If $g(x_i) = y_{j_i}$, for each $x_i \in \X$, we will write $g$ as $g_{y_{j_1}, y_{j_2}, y_{j_3},\dots,y_{j_n}}$ (omitting the commas when possible). This notation will simplify the formulas for  various quandle homomorphisms considered in this paper.

Our goal is to study quandle coloring quivers of $(p,2)$-torus links with respect to dihedral quandles $\Z_n$. Before we proceed, we need to prove the following result about the space $Hom(\Z_n, \Z_n)$ of quandle automorphisms of the dihedral quandle $\Z_n$, where $n \geq 3$.

\begin{proposition}\label{Hom(X,X)}
Consider the dihedral quandle $\Z_n$, when $n \geq 3$.  Then $Hom(\Z_n, \Z_n)$ is a set of size $n^2$ of quandle homomorphisms $\phi_{\gamma} \co \Z_n \to \Z_n$ that are defined recursively, where $\gamma$ is any $n$-tuple $(\gamma_1,\gamma_2,\gamma_3,\dots, \gamma_n) \in (\Z_n)^n$, such that
\[
\phi_{\gamma}:= \left\{\begin{array}{lr}
\phi_{\gamma}(0)=\gamma_1, &\\
\phi_{\gamma}(1)=\gamma_2, &\\
\phi_{\gamma}(k) = \phi_{\gamma}(k-2)\kei \phi_{\gamma}(k-1), &\text{for }2\leq k\leq n-1.\end{array}\right.
\]
\end{proposition}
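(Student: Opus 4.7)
The plan is to prove the proposition in two stages: first show that any quandle homomorphism $\phi : \Z_n \to \Z_n$ is completely determined by its values on $0$ and $1$, giving the upper bound $|Hom(\Z_n, \Z_n)| \leq n^2$, and then exhibit an honest homomorphism for every choice of $(\gamma_1, \gamma_2) \in \Z_n \times \Z_n$ to achieve the count.

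For the first stage, I would observe that $\Z_n$ is generated as a quandle by $\{0, 1\}$ under the dihedral operation. Concretely, for any $k$ with $2 \leq k \leq n-1$ one has $(k-2) \kei (k-1) = 2(k-1) - (k-2) = k$ in $\Z_n$. Applying the homomorphism property to this identity gives $\phi(k) = \phi(k-2) \kei \phi(k-1)$, which is exactly the recursion in the statement. A straightforward induction on $k$ then shows that $\phi$ is determined entirely by $\gamma_1 := \phi(0)$ and $\gamma_2 := \phi(1)$, so $|Hom(\Z_n, \Z_n)| \leq n^2$.

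For the second stage, fix any $(\gamma_1, \gamma_2) \in \Z_n \times \Z_n$ and define $\phi_\gamma$ by the recursion in the proposition. The cleanest way to verify that it really is a quandle homomorphism is to unwind the two-term recurrence $\phi_\gamma(k) = 2\phi_\gamma(k-1) - \phi_\gamma(k-2)$ into the closed form
\[
\phi_\gamma(k) \equiv (1-k)\gamma_1 + k\gamma_2 \pmod{n},
\]
which follows from a short induction. This formula is well-defined as a map $\Z_n \to \Z_n$, since replacing $k$ by $k+n$ shifts the right-hand side by $n(\gamma_2 - \gamma_1)$, a multiple of $n$. With the closed form in hand, verifying $\phi_\gamma(x \kei y) = \phi_\gamma(x) \kei \phi_\gamma(y)$ reduces to the routine modular identity
\[
\bigl(1-(2y-x)\bigr)\gamma_1 + (2y-x)\gamma_2 \equiv 2\bigl[(1-y)\gamma_1 + y\gamma_2\bigr] - \bigl[(1-x)\gamma_1 + x\gamma_2\bigr] \pmod{n},
\]
whose two sides both simplify to $(1 + x - 2y)\gamma_1 + (2y - x)\gamma_2$. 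Since distinct pairs $(\gamma_1, \gamma_2)$ produce distinct maps (they already differ at $0$ or $1$), we obtain exactly $n^2$ homomorphisms.

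The principal obstacle is the homomorphism check for the recursively-defined map: the recursion itself only records consistency at the generating relations $(k-2) \kei (k-1) = k$ and does not, on its face, guarantee $\phi_\gamma(x \kei y) = \phi_\gamma(x) \kei \phi_\gamma(y)$ for arbitrary $x, y \in \Z_n$. Passing to the closed-form expression is what bypasses this difficulty and collapses the verification to a one-line computation modulo $n$.
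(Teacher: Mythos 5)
Your proof is correct, and it is actually more complete than the one in the paper. Both arguments establish the ``uniqueness'' half the same way: the identity $(k-2)\kei(k-1)=k$ in $\Z_n$ forces the recursion $\phi(k)=\phi(k-2)\kei\phi(k-1)$, so any homomorphism is determined by $\phi(0)$ and $\phi(1)$, giving $|Hom(\Z_n,\Z_n)|\leq n^2$. The paper stops there and simply counts the $n^2$ choices of the pair $(\phi(0),\phi(1))$, implicitly assuming that every such choice extends to a genuine quandle homomorphism; this ``existence'' half is exactly the obstacle you correctly identify, since satisfying the homomorphism condition on the generating relations $(k-2)\kei(k-1)=k$ does not a priori imply it for arbitrary $x,y\in\Z_n$. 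Your closed form $\phi_\gamma(k)\equiv(1-k)\gamma_1+k\gamma_2\pmod{n}$ (equivalently, the affine map $k\mapsto\gamma_1+k(\gamma_2-\gamma_1)$) disposes of this cleanly: it is well defined modulo $n$, satisfies the recursion and the initial conditions, and the one-line modular computation confirms $\phi_\gamma(x\kei y)=\phi_\gamma(x)\kei\phi_\gamma(y)$ for all $x,y$. Since distinct pairs give distinct maps, the count $n^2$ is exact. In short, your argument fills a genuine (if minor) gap in the published proof at the cost of one extra induction, and the closed form is also independently useful for explicit computations with these homomorphisms.
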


\begin{proof}
The operation of the dihedral quandle $(\Z_n, \kei)$ is given by $x \kei y = (2y-x)\text{ mod }n$, for any $x,y\in \Z_n$.  Then, $k\kei (k+1) = k+2$, for all $0 \leq k \leq n-3$, where it follows that $(n-2) \kei (n-1) = 0$ and $(n-1)\kei 0 = 1$.
Since any quandle homomorphism $\phi \in Hom(\Z_n, \Z_n)$ must satisfy $\phi(x\kei y) = \phi(x)\kei\phi(y)$, for any $x,y \in \Z_n$ (using the same $\kei$), then $\phi$ must now satisfy $\phi(k)\kei\phi(k+1) = \phi(k\kei (k+1)) = \phi(k+2)$, for all  $0 \leq k \leq n-3$, further yielding $\phi(n-2) \kei \phi(n-1) = \phi(0)$  and $\phi(n-1)\kei\phi(0) = \phi(1)$.  

Hence, $\phi(0)$ and $\phi(1)$ completely determine $\phi$, and we have the following recursive relation for the image of $(0,1,2,\dots,n-1)$ under $\phi$:
\[
(0,1,2,\dots,n-1) \stackrel{\phi}{\mapsto} (\phi(0),\phi(1),\phi(0)\kei\phi(1),\phi(1)\kei\phi(2),\dots,\phi(n-3)\kei\phi(n-2)).
\]
Since there are $n$ ways to assign each of $\phi(0)$ and $\phi(1)$ an element in $\Z_n$, we see that $|Hom(\Z_n, \Z_n)| = n^2$.
\end{proof}

\begin{remark} \label{Hom(X,X)bis}
From the cyclic behavior of the dihedral quandle $\Z_n$ and the proof of Proposition~\ref{Hom(X,X)}, we see that any homomorphism $\phi_{\gamma} \in Hom(\Z_n, \Z_n)$, where $\gamma$ is an $n$-tuple $(\gamma_1,\gamma_2,\dots, \gamma_n) \in (\Z_n)^n$, is completely determined by the images $\phi_{\gamma}(k) = \gamma_{k+1}$ and $\phi_{\gamma}(k+1) = \gamma_{k+2}$, where if $k = n-1$ then $k+2 = 0$ and if $k = n$ then $k+1 = 0$ and $k+2=1$.

We also remark that using our notation, we can write $\phi_\gamma = \phi_{\gamma_1 \gamma_2 \dots \gamma_n}$.
\end{remark}

Using Theorem~\ref{torus coloring theorem} together with Proposition~\ref{Hom(X,X)} and Corollary~\ref{Hom(X,X)bis}, we are ready to state and prove our main results regarding quivers.

\begin{theorem}\label{torus quiver theorem1}
Given a torus link $\T(p,2)$ and $\Z_n$ a dihedral quandle, where $\gcd(p,n) = 1$, the full $\Z_n$-quandle coloring quiver of $\T(p,2)$  is the complete directed graph:
$$\Q_{\Z_n}(\T(p,2)) = \left(\overleftrightarrow{K_{n}},\hat{n}\right).$$
\end{theorem}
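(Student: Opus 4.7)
The plan is to identify the $n$ vertices of $\Q_{\Z_n}(\T(p,2))$ and then count the multiplicities of the directed edges between them. By Theorem~\ref{torus coloring theorem}(i), the hypothesis $\gcd(p,n)=1$ yields $|Hom(Q(\T(p,2)), \Z_n)|=n$ and every coloring is trivial; for each $a\in\Z_n$, let $f_a\co Q(\T(p,2))\to\Z_n$ denote the constant coloring with value $a$, so that the vertex set of the quiver is $\{v_{f_0}, v_{f_1}, \dots, v_{f_{n-1}}\}$. For any $\phi\in Hom(\Z_n,\Z_n)$, the composition $\phi\circ f_a$ is again a constant coloring, namely $f_{\phi(a)}$. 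Hence the multiplicity of the directed edge from $v_{f_a}$ to $v_{f_b}$ (with loops corresponding to $a=b$) equals $|\{\phi\in Hom(\Z_n,\Z_n)\,|\,\phi(a)=b\}|$.

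The core step is to show that this cardinality equals $n$ for every ordered pair $(a,b)\in\Z_n\times\Z_n$. For this I would first derive, by induction on $k$ using the recursion $\phi(k)=\phi(k-2)\kei\phi(k-1)=2\phi(k-1)-\phi(k-2)\pmod{n}$ from Proposition~\ref{Hom(X,X)}, the closed form
\[
\phi(k)\equiv k\,\phi(1)-(k-1)\,\phi(0)\pmod{n},\qquad k\in\Z_n.
\]
The equation $\phi(a)=b$ then becomes a single linear equation $a\,\phi(1)-(a-1)\,\phi(0)\equiv b\pmod{n}$ in the free parameters $(\phi(0),\phi(1))\in\Z_n^2$ that parametrize $Hom(\Z_n,\Z_n)$. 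Consider the group homomorphism $L\co\Z_n^2\to\Z_n$ defined by $L(x,y)=ay-(a-1)x$. Since $L(1,1)=a-(a-1)=1$, the map $L$ is surjective, and therefore every fiber of $L$ has size $n^2/n=n$. Applied with right-hand side $b$, this yields exactly $n$ homomorphisms $\phi$ satisfying $\phi(a)=b$.

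Combining these observations, between every ordered pair of distinct vertices there are exactly $n$ directed edges, and at every vertex there are exactly $n$ loops, which is the defining description of $(\overleftrightarrow{K_n},\hat{n})$. The only mildly subtle step is the surjectivity of $L$; once one notes that $\gcd(a,a-1)=1$ (or equivalently $L(1,1)=1$) it becomes immediate, and the rest reduces to bookkeeping with the parametrization of $Hom(\Z_n,\Z_n)$ supplied by Proposition~\ref{Hom(X,X)} and Remark~\ref{Hom(X,X)bis}.
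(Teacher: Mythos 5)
Your proposal is correct and follows essentially the same route as the paper: identify the $n$ vertices with the constant colorings via Theorem~\ref{torus coloring theorem}(i), observe that $\phi\circ f_a=f_{\phi(a)}$, and reduce the edge count to showing $|\{\phi\in Hom(\Z_n,\Z_n)\,:\,\phi(a)=b\}|=n$. The only cosmetic difference is in that last count: the paper invokes Remark~\ref{Hom(X,X)bis} (a homomorphism is determined by any two consecutive values, so fixing $\phi(a)=b$ leaves $n$ free choices for $\phi(a-1)$), whereas you use the explicit affine formula $\phi(k)\equiv k\phi(1)-(k-1)\phi(0)\pmod{n}$ and count fibers of the surjective linear map $L$ — an equally valid, and arguably more transparent, way to get the same number.
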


\begin{proof}
 By Theorem~\ref{torus coloring theorem}, we know that $Hom(Q(\T(p,2)), \Z_n)$ is the collection of all $n$ trivial $\Z_n$-quandle colorings of $\T(p,2)$. Hence, the quandle coloring quiver $\Q_{\Z_n}(\T(p,2))$ has $n$ vertices, one for each of the elements in the coloring space $Hom(Q(\T(p,2)), \Z_n)$.

 Let $\phi =\phi_{\gamma_1\gamma_2\dots\gamma_n} \in Hom(\Z_n , \Z_n)$ be a quandle homomorphism given by: $0 \stackrel{\phi}{\mapsto} \gamma_1, 1 \stackrel{\phi}{\mapsto} \gamma_2, \dots,(n-1)  \stackrel{\phi}{\mapsto} \gamma_n$.
In Proposition~\ref{Hom(X,X)} we proved that $|Hom(\Z_n, \Z_n)| = n^2$, which was due to the fact that a quandle homomorphism $\phi_{\gamma_1\gamma_2\dots\gamma_n}$ is completely determined by the choices for $\gamma_1$ and $\gamma_2$. In fact, by Remark~\ref{Hom(X,X)bis}, we know that $\phi_{\gamma_1\gamma_2\dots\gamma_n}$ is determined by the choices for any two consecutive values $\gamma_k$ and $\gamma_{k+1}$, where if $k = n$ then $k+1 = 1$. 

Let $\psi_{i} \in Hom(Q(\T(p,2)), \Z_n)$ denote the trivial $\Z_n$-quandle coloring of $\T(p,2)$ associated to some fixed $i \in \Z_n$ and let $v_{\psi_i}$ be the corresponding vertex in the quandle coloring quiver $\Q_{\Z_n}(\T(p,2))$. Without loss of generality, by the above discussion, we can use that any map $\phi_{\gamma_1\gamma_2\dots\gamma_n}$ is completely determined by the choices for $\gamma_i$ and $\gamma_{i+1}$.

Consider $\phi_{\gamma_1 \dots \gamma_{i} i \gamma_{i+2 }\dots\gamma_n} \in Hom(\Z_n , \Z_n)$. That is, $\gamma_{i+1} = i$, which means that $i \stackrel{\phi}{\mapsto} i$ under this map. There are $n$ maps of the form $\phi_{\gamma_1 \dots \gamma_{i} i \gamma_{i+2 }\dots\gamma_n}$ (determined by the value of $\gamma_{i} \in \Z_n)$ and they all satisfy that 
\[ \phi_{\gamma_1 \dots \gamma_{i} i \gamma_{i+2 }\dots\gamma_n} \circ \psi_i = \psi_i.\]
 In addition, there are no other maps $\phi_{\gamma_1\gamma_2\dots\gamma_n}$ satisfying $\phi_{\gamma_1\gamma_2\dots\gamma_n} \circ \psi_i = \psi_i$. It follows that the quiver vertex $v_{\psi_i}$ has $n$ loops that represent $\psi_{i}$ being fixed by those $n$ maps.  

Let $\psi_{j} \in Hom(Q(\T(p,2)), \Z_n)$, where $j\in \Z_n$ is distinct from the fixed value of $i$ above. For similar reasons as above, there are $n$ quandle homomorphisms in $Hom(\Z_n , \Z_n)$ of the form $\phi_{\gamma_1 \dots \gamma_{i} j \gamma_{i+2 }\dots\gamma_n}$ that send $i \stackrel{\phi}{\mapsto} j$. Moreover, 
\[ \phi_{\gamma_1 \dots \gamma_{i} j \gamma_{i+2 }\dots\gamma_n} \circ \psi_i = \psi_j,\]
and there are no other maps $\phi_{\gamma_1\gamma_2\dots\gamma_n}$ satisfying $\phi_{\gamma_1\gamma_2\dots\gamma_n} \circ \psi_i = \psi_j$.
Thus $\psi_{i}$ is also mapped $n$ times to each of the colorings $\psi_{j}$. This implies that there are $n$ directed edges from the vertex $v_{\psi_i}$ to each of the vertices $v_{\psi_j}$, for all $j \not = i$.

Hence, the full quandle coloring quiver of $\T(p,2)$ with respect to $\Z_n$ is a graph with $n$ vertices where every vertex has $n$ directed edges from itself to all other vertices, including itself.  Thus, $\Q_{\Z_n}(\T(p,2))= \left(\overleftrightarrow{K_n},\hat{n}\right)$, whenever $\gcd(p,n) = 1$. 
\end{proof}

\begin{theorem}\label{torus quiver theorem2}
Let $\T(p,2)$ be a torus link and $\Z_n$ a dihedral quandle.  If $\gcd(p,n) =c$ where $c$ is prime, then the full quandle coloring quiver of $\T(p,2)$ with respect to $\Z_n$ is the join of two complete directed graphs:
\[ \Q_{\Z_n}(\T(p,2)) = \left(\overleftrightarrow{K_{n}},\hat{n}\right)  \overleftarrow{\nabla}_{\hat{d}}\left(\overleftrightarrow{K_{n(c-1)}},\hat{d}\right),\]
where $d=n/c$, and the two complete subgraphs correspond to the trivial and nontrivial, respectively, $\Z_n$-quandle colorings of $\T(p,2)$.
\end{theorem}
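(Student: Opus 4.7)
The plan is to combine Theorem~\ref{torus coloring theorem}(ii) with an explicit affine parametrization of $Hom(\Z_n,\Z_n)$ coming from Proposition~\ref{Hom(X,X)}, and then compute compositions class by class. Write $d=n/c$. First I would make both sides of the composition operation totally explicit. On the coloring side, the recurrence $\psi(x_i)=2\psi(x_{i-1})-\psi(x_{i-2})\pmod n$ from the presentation in Remark~\ref{fundamental quandle of torus knot}, together with the congruence $\psi(x_2)\equiv\psi(x_1)\pmod d$ from the proof of Theorem~\ref{torus coloring theorem}(ii), unrolls to
\[
\psi(x_j) \;\equiv\; a + (j-1)\,kd \pmod n,
\]
parametrized by $a\in\Z_n$ and $k\in\{0,1,\dots,c-1\}$; the trivial colorings are $k=0$ and the $n(c-1)$ nontrivial ones are $k\in\{1,\dots,c-1\}$. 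Call these $\psi_{a,k}$.

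On the homomorphism side, the recurrence $\phi(j+2)=2\phi(j+1)-\phi(j)\pmod n$ from Proposition~\ref{Hom(X,X)} has general solution $\phi_{a_0,\delta}(j)=a_0+j\delta\pmod n$, with $a_0,\delta\in\Z_n$ free, giving exactly $n^2$ maps as required. I would verify that the closing relations $\phi(n-2)\kei\phi(n-1)=\phi(0)$ and $\phi(n-1)\kei\phi(0)=\phi(1)$ hold automatically because the formula is additive modulo $n$.

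Next I would compute
\[
(\phi_{a_0,\delta}\circ\psi_{a,k})(x_j) \;=\; (a_0+a\delta) + (j-1)(k\delta)\,d,
\]
and observe that $k\delta\cdot d \equiv k'\cdot d\pmod n$ iff $k\delta\equiv k'\pmod c$. Thus $\phi_{a_0,\delta}\circ\psi_{a,k}=\psi_{a_0+a\delta,\, k'}$ with $k'\equiv k\delta\pmod c$, landing in the trivial stratum precisely when $k\delta\equiv 0\pmod c$. Because $c$ is prime and $1\le k\le c-1$, the residue $k$ is invertible mod $c$, so for any target residue $k'\in\{0,1,\dots,c-1\}$ the set of admissible $\delta\in\Z_n$ forms a single congruence class mod $c$ of size $d$, and each such $\delta$ forces a unique $a_0$ from the constant-term equation.

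Finally I would tabulate the four types of directed edges between vertices of $\Q_{\Z_n}(\T(p,2))$: trivial$\to$trivial gives $n$ edges (any $\delta\in\Z_n$), trivial$\to$nontrivial gives $0$ edges (compositions are always constant), nontrivial$\to$trivial gives $d$ edges (the $k'=0$ case forces $\delta\equiv 0\pmod c$), and nontrivial$\to$nontrivial gives $d$ edges (the $k'\ne 0$ case fixes $\delta\bmod c$). These counts, allowing loops, match precisely the join description $(\overleftrightarrow{K_n},\hat n)\,\overleftarrow{\nabla}_{\hat d}\,(\overleftrightarrow{K_{n(c-1)}},\hat d)$. The only real obstacle is bookkeeping: writing the nontrivial colorings in the arithmetic-progression form $\psi_{a,k}$ and keeping the reductions modulo $c$ versus modulo $n$ straight; primality of $c$ enters only to guarantee that $k$ is a unit mod $c$, which is exactly what makes the edge count from each nontrivial vertex equal to $d$ uniformly across targets.
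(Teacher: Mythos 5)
Your proposal is correct and follows essentially the same route as the paper: stratify the $nc$ colorings from Theorem~\ref{torus coloring theorem}(ii) into trivial and nontrivial classes, and count the homomorphisms $\phi$ solving $\phi\circ\psi_{\sigma}=\psi_{\omega}$ using the fact that a nontrivial coloring has $\psi(x_2)-\psi(x_1)=kd$ with $k$ a unit modulo the prime $c$, so that there are $d$ admissible choices modulo $n$. Your closed-form affine parametrizations $\phi_{a_0,\delta}(j)=a_0+j\delta$ and $\psi_{a,k}(x_j)=a+(j-1)kd$ merely streamline the paper's step-by-step unrolling of the recursions; the key counting step and the resulting edge tabulation are identical.
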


\begin{proof} 
Using Theorem \ref{torus coloring theorem}, the quandle coloring space $Hom(Q(\T(p,2)), \Z_n)$ is the union of all $n$ trivial $\Z_n$-quandle colorings, together with $n(c-1)$ nontrivial $\Z_n$-quandle colorings of $\T(p,2)$.  Therefore, the quandle coloring quiver $\Q_{\Z_n}(\T(p,2))$ contains $n$ vertices corresponding to the trivial $\Z_n$-quandle colorings and $n(c-1)$ vertices corresponding to the nontrivial $\Z_n$-quandle colorings of $\T(p,2)$.

Let $\psi_{\sigma}=\psi_{\sigma_1\sigma_2\sigma_3\dots\sigma_p}$ and $\psi_{\omega}=\psi_{\omega_1\omega_2\omega_3\dots\omega_p}$ be any two quandle colorings in $Hom(Q(\T(p,2)), \Z_n)$. Our notation means that $\psi_{\sigma}(x_i)=\sigma_i$  and $\psi_{\omega}(x_i)=\omega_i$, where $\sigma_i, \omega_i \in\Z_n$ and $x_i$ is a generator of the fundamental quandle $Q(\T(p,2))$.

If both $\psi_{\sigma}$ and $\psi_{\omega}$ are trivial $\Z_n$-quandle colorings (possibly the same coloring), we know from the proof of Theorem~\ref{torus quiver theorem2} that there are $n$ directed edges from the quiver vertex $v_{\psi_{\sigma}}$ to the vertex $v_{\psi_{\omega}}$ (including $n$ directed loops from the vertex $v_{\psi_{\sigma}}$ to itself, if $\psi_{\sigma}=\psi_{\omega}$). It follows that there is a complete directed graph $\left(\overleftrightarrow{K_n},\hat{n}\right)$ associated with the $n$ trivial $\Z_n$-quandle colorings of $\T(p,2)$, as a subgraph of the full quandle coloring quiver $\Q_{\Z_n}(\T(p,2))$. 

If $\psi_{\sigma}$ is a nontrivial quandle coloring, then $\psi_{\sigma}(x_1) = \sigma_1\neq \sigma_2 = \psi_{\sigma}(x_2)$, because equivalent consecutive colors in the subscript of $\psi_{\sigma}$ would impose a trivial $\Z_n$-quandle coloring. By Proposition~\ref{coloring lemma}, for any $x_1,x_2\in Q(\T(p,2))$, a quandle coloring $\psi\in Hom(Q(\T(p,2)), \Z_n)$ must satisfy the congruence $p\psi(x_1)\equiv p\psi(x_2) \text{ (mod }n)$, which reduces to $\psi(x_1)\equiv \psi(x_2) \text{ (mod }d)$, where $d= \frac{n}{c}$ and $c = \gcd(p, n), c \not = 1$. Since $\psi(x_1), \psi(x_2) \in \Z_n$, we have that for each fixed $\psi(x_1)$, there are $c$ possibilities for $\psi(x_2)$ in $\Z_n$ satisfying the above two congruences; specifically, $\psi(x_2) = \psi(x_1) + kd$, where $k$ is an integer such that $0 \leq k \leq c-1$. Moreover, since $\psi_{\sigma}(x_1) \neq \psi_{\sigma}(x_2)$ for a nontrivial quandle coloring $\psi_{\sigma}$, then for each fixed $\psi_{\sigma}(x_1)$ there are $c-1$ possibilities for $\psi_{\sigma}(x_2)$ in $\Z_n$ satisfying $p\psi_{\sigma}(x_1)\equiv p\psi_{\sigma}(x_2) \text{ (mod }n)$.

Let $\psi_{\sigma}$ be a fixed nontrivial quandle coloring in $Hom(Q(\T(p,2)), \Z_n)$. Then $\psi_{\sigma}=\psi_{\sigma_1(\sigma_1+kd)\sigma_3\dots\sigma_p}$, for some $1\leq k \leq c-1$ and $\sigma_1, \sigma_3, \dots, \sigma_p \in \Z_n$. Let $\psi_{\omega}=\psi_{\omega_1(\omega_1+hd)\omega_3\dots\omega_p}$ be any fixed coloring in $Hom(Q(\T(p,2)), \Z_n)$; that is, $h$ is an integer such that $0\leq h \leq c-1$, where $\omega_1, \omega_3, \dots, \omega_p \in \mathbb{Z}_n$ and $h \not = 0$ if $\psi_{\omega}$ is a nontrivial quandle coloring.

Now suppose $\phi_{\gamma_1\gamma_2\dots\gamma_n}\in Hom(\Z_n, \Z_n)$ satisfies $\phi_{\gamma_1\gamma_2\dots\gamma_n} \circ \psi_{\sigma} = \psi_{\omega}$. 
   In particular, this means that 
   \begin{eqnarray}
    (\phi_{\gamma_1\gamma_2\dots\gamma_n} \circ \psi_{\sigma}) (x_1) &=& \psi_{\omega}(x_1) = \omega_1 \label{eq1}\\
   (\phi_{\gamma_1\gamma_2\dots\gamma_n} \circ \psi_{\sigma}) (x_2) &=&\psi_{\omega}(x_2) = \omega_1 + hd. \label{eq2}
   \end{eqnarray}
   Since $\psi_{\sigma}(x_1) = \sigma_1$ and $\psi_{\sigma}(x_2) = \sigma_1 + kd$, equations~\eqref{eq1} and~\eqref{eq2} imply that the following must hold:
\[
    \phi_{\gamma_1\gamma_2\dots\gamma_n}(\sigma_1) = \omega_1 \,\, \text{and} \,\, \phi_{\gamma_1\gamma_2\dots\gamma_n}(\sigma_1 + kd) = \omega_1 +hd. 
\]
   
   In Remark~\ref{Hom(X,X)bis} we showed that a homomorphism $\phi_{\gamma_1\gamma_2\dots\gamma_n}\in Hom(\Z_n, \Z_n)$ is determined by two consecutive values in its subscript. Moreover, by fixing the value of $\omega_1$, a homomorphism $\phi_{\gamma_1\gamma_2\dots \omega_1 \tau \dots \gamma_n}$, with some $\tau\in\Z_n$, satisfying equation~\eqref{eq1} yields:
   \begin{eqnarray*}
   \phi_{\gamma_1\gamma_2\dots \omega_1 \tau \dots \gamma_n}(\sigma_1) &=& \omega_1\\
   \phi_{\gamma_1\gamma_2\dots \omega_1 \tau \dots \gamma_n}(\sigma_1 +1) &=& \tau \\
    \phi_{\gamma_1\gamma_2\dots \omega_1 \tau \dots \gamma_n}(\sigma_1 +2)  &=& (2 \tau - \omega_1) \text{ mod} \ n\\
     \phi_{\gamma_1\gamma_2\dots \omega_1 \tau \dots \gamma_n}(\sigma_1 +3)  &=& (3 \tau - 2\omega_1) \text{ mod} \ n \\
     \vdots \\    
     \phi_{\gamma_1\gamma_2\dots \omega_1 \tau \dots \gamma_n}(\sigma_1 + kd)  &=& (kd \tau - (kd-1)\omega_1) \text{ mod} \ n 
   \end{eqnarray*}
 Hence, a homomorphism  $\phi_{\gamma_1\gamma_2\dots \omega_1 \tau \dots \gamma_n}$ satisfies both of the equations~\eqref{eq1} and~\eqref{eq2} if and only if
 \begin{eqnarray} \label{eq3}
 kd \tau - (kd-1)\omega_1 \equiv \omega_1 + hd  \text{ (mod }n). 
 \end{eqnarray}
 
 The latter congruence is equivalent to $kd\tau \equiv kd\omega_1+hd \text{ (mod }n)$, and since $\gcd(d, n) = d$, this congruence reduces further to:
\begin{eqnarray} \label{eq4}
k\tau \equiv k\omega_1+h\text{ (mod }\frac{n}{d}).
\end{eqnarray}

Note that $k$, $\omega_1$, and $h$ are considered fixed in this congruence. Since $\frac{n}{d}$ is equal to the prime $c$ and $1 \leq k \leq c-1$, we have that $\gcd(k, \frac{n}{d}) = 1$ and thus the congruence in~\eqref{eq4} has solutions $\tau$. In particular, $\tau$ has a unique solution modulo $\frac{n}{d}$. It follows that for the congruence~\eqref{eq3}, there are $d$ incongruent solutions modulo $n$ for $\tau$.
Therefore, for a given nontrivial quandle coloring $\psi_{\sigma}$, there are $d$ homomorphisms $\phi_{\gamma_1\gamma_2 \dots \gamma_n}$ such that $\phi_{\gamma_1\gamma_2 \dots \gamma_n} \circ \psi_{\sigma} = \psi_{\omega}$, for all (trivial and nontrivial) $\Z_n$-quandle colorings $\psi_{\omega}$ of $\T(p, 2)$. 

The above reasoning shows that for each vertex $v_{\psi_{\sigma}}$ in the quiver corresponding to a nontrivial coloring, there are $d$ directed edges to all of the vertices in the quiver, including to itself. It follows that the quiver $\Q_{\Z_n}(\T(p,2))$ contains as a subgraph, a complete directed graph $\left(\overleftrightarrow{K_{n(c-1)}},\hat{d}\right)$ associated with the nontrivial quandle colorings that is joined, using the weight function $\hat{d}$, to the subgraph associated with trivial colorings:
\[ \left(\overleftrightarrow{K_n},\hat{n}\right)  \overleftarrow{\nabla}_{\hat{d}} \left(\overleftrightarrow{K_{n(c-1)}},\hat{d}\right). \]

Moreover, there is no map in $Hom(\Z_n, \Z_n)$ that sends a fixed value in $\Z_n$ to two distinct values in $\Z_n$. That is, there is no homomorphism $\phi_{\gamma_1\gamma_2 \dots \gamma_n}$ in $Hom(\Z_n, \Z_n)$ that sends a trivial $\Z_n$-quandle coloring of $\T(p,2)$ to a nontrivial $\Z_n$-quandle coloring of $\T(p,2)$. Hence, the joining behavior cannot be reciprocated from $\left(\overleftrightarrow{K_n},\hat{n}\right)$ to $\left(\overleftrightarrow{K_{n(c-1)}},\hat{d}\right)$.  

This completes the proof of $\Q_{\Z_n}(\T(p,2)) = \left(\overleftrightarrow{K_n},\hat{n}\right) \overleftarrow{\nabla}_{\hat{d}} \left(\overleftrightarrow{K_{n(c-1)}},\hat{d}\right)$, when $\gcd(p, n) = c $ and $c$ is prime. 
\end{proof}

\begin{corollary} \label{quiver-cor}
For a given torus link $\T(p,2)$ and a prime $n$, we have:
\begin{itemize}
\item If $n \nmid p$, then $\Q_{\Z_n}(\T(p,2)) = \left(\overleftrightarrow{K_{n}},\hat{n}\right)$.
\item If $n \mid p$, then $\Q_{\Z_n}(\T(p,2)) = \left(\overleftrightarrow{K_{n}},\hat{n}\right)  \overleftarrow{\nabla}_{\hat{1}}\left(\overleftrightarrow{K_{n(n-1)}},\hat{1}\right)$.
\end{itemize}
\end{corollary}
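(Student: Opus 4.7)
The plan is to derive this corollary as a direct specialization of Theorems~\ref{torus quiver theorem1} and~\ref{torus quiver theorem2}, using the observation that when $n$ is prime the possible values of $\gcd(p,n)$ are extremely restricted. I would begin by noting that for a prime $n$, the only divisors are $1$ and $n$, so $\gcd(p,n) \in \{1, n\}$; the first case occurs exactly when $n \nmid p$ and the second exactly when $n \mid p$.

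In the first case, $\gcd(p,n) = 1$, and Theorem~\ref{torus quiver theorem1} immediately gives
\[ \Q_{\Z_n}(\T(p,2)) = \left(\overleftrightarrow{K_{n}},\hat{n}\right),\]
which is exactly the claimed identity. In the second case, $\gcd(p,n) = n$, and since $n$ is prime we may take $c = n$ in Theorem~\ref{torus quiver theorem2}. Substituting $c = n$ into the formula for the quiver yields $d = n/c = n/n = 1$ and $n(c-1) = n(n-1)$, so the theorem gives
\[ \Q_{\Z_n}(\T(p,2)) = \left(\overleftrightarrow{K_{n}},\hat{n}\right)  \overleftarrow{\nabla}_{\hat{1}}\left(\overleftrightarrow{K_{n(n-1)}},\hat{1}\right),\]
which matches the second bullet.

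There is no real obstacle here, since the corollary is a direct consequence of two already-established theorems; the only thing to verify is that the primality of $n$ forces the hypotheses of the appropriate theorem in each case. In particular, when $n \mid p$ the value $c = n$ is itself prime, which is precisely the hypothesis needed for Theorem~\ref{torus quiver theorem2} to apply, so the application is legitimate and the proof consists of little more than a case split and substitution.
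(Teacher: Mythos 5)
Your proposal is correct and follows exactly the paper's own argument: the primality of $n$ forces $\gcd(p,n)$ to be $1$ or $n$, and the two bullets follow from Theorem~\ref{torus quiver theorem1} and Theorem~\ref{torus quiver theorem2} (with $c=n$ prime, $d=1$) respectively. Nothing is missing.
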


\begin{proof}
The statements follow as particular cases from Theorems~\ref{torus quiver theorem1} and~\ref{torus quiver theorem2}. Since $n$ is prime, if $n \nmid p$, then $\text{gcd}(n, p) = 1$, and Theorem~\ref{torus quiver theorem1} implies the first statement of this corollary. If $n \mid p$, then $\text{gcd}(n, p) = n$ where $n$ is prime, and the application of Theorem~\ref{torus quiver theorem2} yields the second statement above.
\end{proof}

The following statement is a direct consequence of Corollary~\ref{quiver-cor}.

\begin{corollary} 
Let $n$ be a prime.
\begin{itemize}
\item If $n \nmid p_1$ and $n \nmid p_2$, then the quandle coloring quivers $\Q_{\Z_n}(\T(p_1,2))$ and $\Q_{\Z_n}(\T(p_2,2))$ are isomorphic.
\item  If $n \mid p_1$ and $n \mid p_2$, then the quandle coloring quivers $\Q_{\Z_n}(\T(p_1,2))$ and $\Q_{\Z_n}(\T(p_2,2))$ are isomorphic.
\end{itemize}
\end{corollary}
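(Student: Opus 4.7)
The plan is to invoke Corollary~\ref{quiver-cor} twice in each case and observe that the resulting graph description depends only on $n$, not on the specific values $p_1$ or $p_2$. Since the two quivers in each case then fit the same description as weighted directed graphs, any bijection between their vertex sets that respects the decomposition into the pieces $\overleftrightarrow{K_n}$ and $\overleftrightarrow{K_{n(n-1)}}$ will preserve the weight functions and hence furnish the desired isomorphism of quivers.

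First, in the case where $n \nmid p_1$ and $n \nmid p_2$, I would apply the first bullet of Corollary~\ref{quiver-cor} to each of $p_1$ and $p_2$ separately, obtaining
\[
\Q_{\Z_n}(\T(p_1,2)) = \left(\overleftrightarrow{K_n}, \hat{n}\right) = \Q_{\Z_n}(\T(p_2,2)).
\]
Since both quivers have exactly $n$ vertices, all connected to one another (and to themselves) with the constant weight function $\hat{n}$, any bijection between the two vertex sets automatically preserves edge multiplicities, giving an isomorphism of quivers.

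Second, in the case where $n \mid p_1$ and $n \mid p_2$, I would apply the second bullet of Corollary~\ref{quiver-cor} to each of $p_1$ and $p_2$, obtaining
\[
\Q_{\Z_n}(\T(p_i,2)) = \left(\overleftrightarrow{K_n}, \hat{n}\right) \overleftarrow{\nabla}_{\hat{1}} \left(\overleftrightarrow{K_{n(n-1)}}, \hat{1}\right), \qquad i=1,2.
\]
Since the right-hand side is the same weighted directed graph in both cases, a bijection sending the $n$ trivial-coloring vertices of $\Q_{\Z_n}(\T(p_1,2))$ to those of $\Q_{\Z_n}(\T(p_2,2))$ and the $n(n-1)$ nontrivial-coloring vertices to their counterparts preserves all edge weights and the join structure, giving an isomorphism of quivers.

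The result is essentially immediate, so there is no substantive obstacle; the only thing to watch is the bookkeeping observation that the right-hand sides in Corollary~\ref{quiver-cor} depend only on $n$ (not on $p$), so that in each hypothesis the two quivers are literally equal as weighted directed graphs up to relabeling of vertices. Once this is recognized, the corollary follows with no further computation.
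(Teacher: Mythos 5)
Your proposal is correct and matches the paper's approach exactly: the paper states this result as a direct consequence of Corollary~\ref{quiver-cor}, which is precisely the argument you give by applying that corollary to $p_1$ and $p_2$ and noting the resulting weighted directed graphs depend only on $n$. Nothing further is needed.
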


\begin{example}
As an example, we consider the full $\Z_3$-quandle coloring quiver of the trefoil knot $\T(3, 2)$. In this case, $n = p = 3$, and according to Theorem~\ref{torus quiver theorem2}, the quiver  $\Q_{\Z_3}(\T(3,2))$ has the following form:
\[ \Q_{\Z_3}(\T(3,2)) = \left(\overleftrightarrow{K_{3}},\hat{3}\right)  \overleftarrow{\nabla}_{\hat{1}}\left(\overleftrightarrow{K_{6}},\hat{1}\right). \]

In Fig.~\ref{quiver example-part 1} on the left, we show the graph $\left(\overleftrightarrow{K_{3}},\hat{3}\right)$ associated with the trivial $\Z_3$-quandle colorings of $\T(3,2)$ and  on the right of the figure, there is the graph $\left(\overleftrightarrow{K_{6}},\hat{1}\right)$ corresponding to the $n(c-1) = 3\cdot 2$ nontrivial $\Z_3$-quandle colorings of the trefoil knot. Both of these graphs are subgraphs of the full $\Z_3$-quandle coloring quiver of the trefoil knot. In Fig.~\ref{quiver example-part 2} we present the quandle coloring quiver $\Q_{\Z_3}(\T(3,2))$ as the join of these subgraphs by the weight function $\hat{d} = \hat{1}$: $\left(\overleftrightarrow{K_{3}},\hat{3}\right)  \overleftarrow{\nabla}_{\hat{1}}\left(\overleftrightarrow{K_{6}},\hat{1}\right)$.

\begin{figure}[hb] 
 \includegraphics[height = 2in]{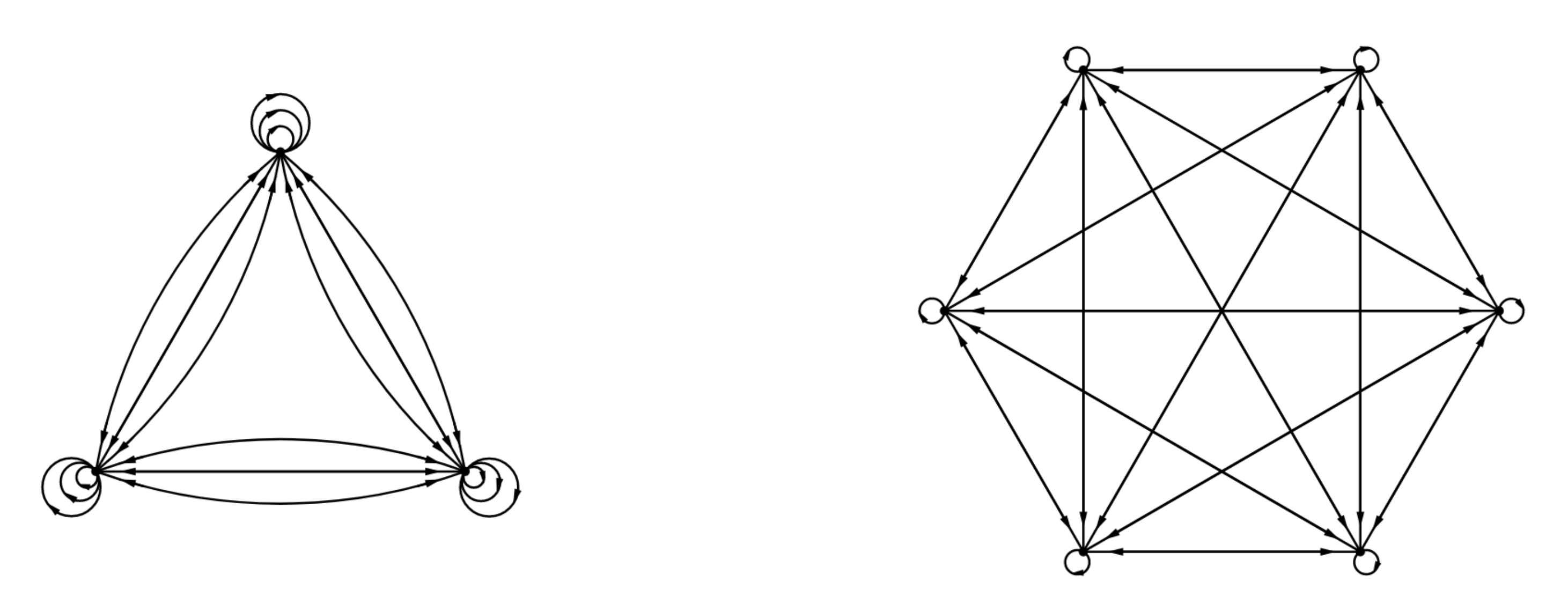}
 \put(-320, -10){\fontsize{10}{10}$\left(\overleftrightarrow{K_{3}},\hat{3}\right)$}
 \put(-100, -10){\fontsize{10}{10}$\left(\overleftrightarrow{K_{6}},\hat{1}\right)$}
 \caption{The subgraph components of the quiver $\Q_{\Z_3}(\T(3,2))$}
\label{quiver example-part 1}
\end{figure}

\begin{figure}[hb] 
\includegraphics[height = 2.6in]{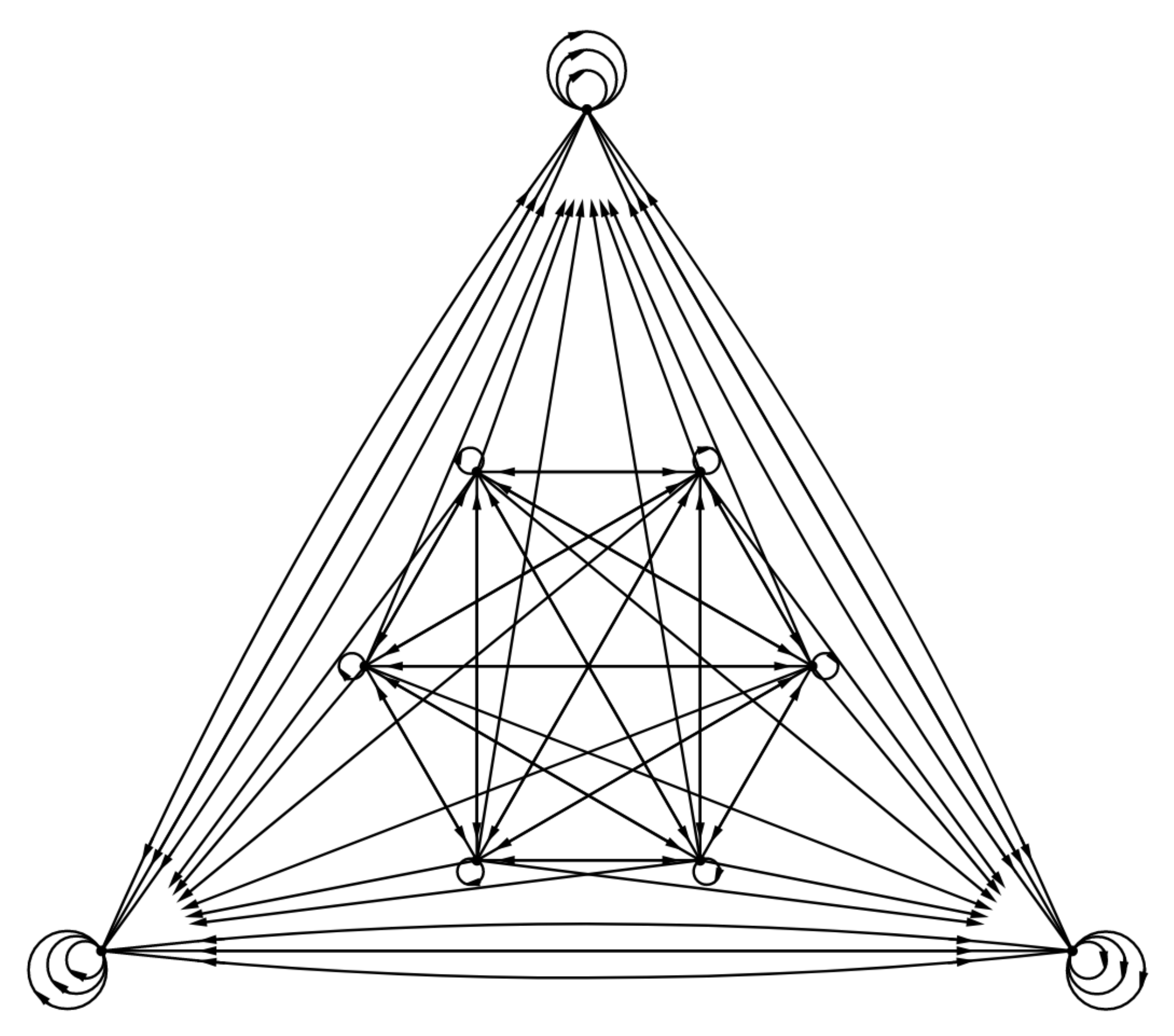}
\caption{The quandle coloring quiver $\Q_{\Z_3}(\T(3,2))$}
\label{quiver example-part 2}
\end{figure}
\end{example}

\begin{remark}
Taniguchi~\cite{Taniguchi} studied quandle coloring quivers of links with respect to dihedral quandles, where the focus was on isomorphic quandle coloring quivers. Among other things, it was proved in~\cite[Theorem 3.3]{Taniguchi} that when the dihedral quandle $\Z_n$ is of prime order, then the $\Z_n$-quandle coloring quivers of two links $L_1$ and $L_2$ are isomorphic if and only if the quandle coloring spaces of $L_1$ and $L_2$ with respect to the dihedral quandle $\Z_n$ have the same size. That is, it was proved that if $n$ is prime, then $\Q_{\Z_n}(L_1) \cong \Q_{\Z_n}(L_2) $ as quivers if and only if $|Hom(Q(L_1), \Z_n)| = |Hom(Q(L_2), \Z_n)|$. Hence, for $n$ prime, we can use Taniguchi's result in combination with our results in Theorem~\ref{torus coloring theorem} and Corollary~\ref{quiver-cor}, to say something about the full $\Z_n$-quandle coloring quiver of a certain link $L$. For this, we would need to have that $|Hom(Q(L), \Z_n)| = |Hom(Q(\T(p, 2)), \Z_n)|$, for a particular link $L$, a torus link $\T(p, 2)$, and a prime $n$. Specifically, if $n$ is prime and $|Hom(Q(L), \Z_n)|  = n$ for a certain link $L$, then $\Q_{\Z_n}(L) \cong \left(\overleftrightarrow{K_n},\hat{n}\right)$. Moreover, if $n$ is prime and $|Hom(Q(L), \Z_n)|  = n^2$, then $\Q_{\Z_n}(L) \cong  \left(\overleftrightarrow{K_n},\hat{n}\right) \overleftarrow{\nabla}_{\hat{1}} \left(\overleftrightarrow{K_{n(n-1)}},\hat{1}\right)$.
\end{remark}

\end{document}